\newtheorem{theorem}{Theorem}
\newtheorem{lemma}[theorem]{Lemma}
\newtheorem{corollary}[theorem]{Corollary}
\DeclareMathOperator{\tr}{\mathrm{tr}}
\DeclareMathOperator{\SUM}{\textsc{sum}}
\DeclareMathOperator{\HP}{HP}
\DeclareMathOperator{\HC}{HC}
\DeclareMathOperator{\SP}{SP}
\DeclareMathOperator{\SC}{SC}
\title{Making Walks Count: From Silent Circles to Hamiltonian Cycles}
\author{Max A. Alekseyev and G\'erard P. Michon}
\date{}
\begin{document}
\maketitle

Leonhard Euler (1707--1783) famously invented graph theory in 1735, by solving a puzzle of interest to the inhabitants of 
K\"onigsberg.  The city comprised three distinct land masses, connected by seven bridges. 
The residents sought a walk through the city that crossed each bridge exactly once, but were consistently unable to find one.  
Euler reduced the problem to its bare bones by representing each land mass as a node and each bridge as an edge connecting two nodes.
He then showed that such a puzzle would have a solution if and only if every node 
was at the origin of an even number of edges, 
with at most two exceptions---which could only be at the start or the
end of the journey. Since this was not the case in K\"onigsberg, the puzzle had no solution.
 The sort of diagram Euler employed, in which the nodes were represented by dots and the edges by line segments connecting the dots,
is today referred to as a \textit{graph}.  Sometimes it is convenient to use arrows instead of line segments, to imply that the connection goes in
only one direction.  
The resulting construct is now referred to as a \textit{directed graph}, or \textit{digraph} for short.

Except for tiny examples like the one inspired by K\"onigsberg, a sketch on paper is rarely an adequate description of a graph.  
One convenient representation of a digraph 
is given by its \emph{adjacency matrix} $A$, where the element $A_{i,j}$ is the number of edges going from
node $i$ to node $j$ (in a \textit{simple graph}, that number is either $0$ or $1$).
An undirected graph, like the K\"onigsberg graph, can be viewed as a digraph with a symmetric
adjacency matrix (as every undirected edge between two nodes corresponds to
a pair of directed edges going back and forth between the nodes).

A fruitful bonus of using adjacency matrices to represent graphs is that the ordinary multiplication of such matrices is surprisingly meaningful:
the $n$-th power of the adjacency matrix 
describes \emph{walks} along $n$ successive edges (not necessarily distinct) in the graph. 
This observation leads to a method called the \emph{transfer-matrix method} (e.g., see Stanley \cite[Section 4.7]{stanley}) 
that employs linear algebra techniques to enumerate walks very efficiently.
We shall perform a few spectacular enumerations using this method.

The element $A_{i,j}$ of the adjacency matrix can be viewed as the number of walks of length $1$ from node $i$ to node $j$. 
What is the number of such walks of length 2? Well, it is clearly the number of ways to go from $i$ to some node $k$ along one edge and then from that node $k$ to node $j$ along
a second edge. This amounts to the sum of the products $A_{i,k}\cdot A_{k,j}$ over all $k$, which is immediately recognized as a matrix element of the square of $A$, namely $(A^2)_{i,j}$.
More generally, the above is the pattern for a proof by induction on $n$ of the following theorem.

\begin{theorem}[{\cite[Theorem 4.7.1]{stanley}}]\label{Th1}
The number $(A^n)_{i,j}$ equals the number of walks of length $n$ going from node $i$ to node $j$
in the digraph with the adjacency matrix $A$.
\end{theorem}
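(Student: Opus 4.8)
The plan is to argue by induction on the walk length $n$, following exactly the pattern sketched above for the cases $n=1$ and $n=2$. The base case $n=1$ is immediate: a walk of length $1$ from node $i$ to node $j$ is by definition a single edge from $i$ to $j$, and the number of these is $A_{i,j}=(A^1)_{i,j}$ by the very definition of the adjacency matrix.

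For the inductive step I would assume the statement for length $n-1$ and all pairs of nodes, and then decompose an arbitrary walk of length $n$ from $i$ to $j$ uniquely as a walk of length $n-1$ from $i$ to some intermediate node $k$, followed by a single edge from $k$ to $j$. Classifying the length-$n$ walks according to the identity of this node $k$ partitions them, and the count within each class is the product of the number of length-$(n-1)$ walks from $i$ to $k$ and the number of edges from $k$ to $j$. By the inductive hypothesis and the definition of $A$ this product equals $(A^{n-1})_{i,k}\cdot A_{k,j}$, so summing over all $k$ yields $\sum_k (A^{n-1})_{i,k}A_{k,j}=(A^{n-1}A)_{i,j}=(A^n)_{i,j}$, which is exactly the claimed identity.

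The one point deserving care---more a matter of bookkeeping than a genuine obstacle---is handling multigraphs, where $A_{k,j}$ can exceed $1$. To make the product rule literally correct one must fix the convention that a walk is a sequence of chosen edges, not merely a sequence of visited vertices; then appending any one of the $A_{k,j}$ parallel edges from $k$ to $j$ to a length-$(n-1)$ walk ending at $k$ produces $A_{k,j}$ genuinely distinct length-$n$ walks, and every length-$n$ walk arises in this way exactly once. With this convention in place, the combinatorial decomposition mirrors the definition of matrix multiplication term by term, so the induction closes cleanly. (If one also wishes to include the degenerate case $n=0$, it suffices to observe that $A^0=I$ counts the single empty walk from each node to itself.)
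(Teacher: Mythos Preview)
Your proposal is correct and follows exactly the inductive pattern the paper sketches just before stating the theorem: decompose a length-$n$ walk into a length-$(n-1)$ prefix and a final edge, then recognize the resulting sum as the matrix product $(A^{n-1}A)_{i,j}$. The paper gives no further detail beyond this outline, so your write-up (including the remark on multigraphs and the $n=0$ convention) is, if anything, more thorough than the source.
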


A walk is called \emph{closed} if it starts and ends at the same node.
Theorem~\ref{Th1} immediately implies the following statement for the number of closed walks:

\begin{corollary}\label{Th2}
In a digraph with the adjacency matrix $A$, the number of closed walks of length $n$ equals $\tr(A^n)$, the trace of $A^n$.
\end{corollary}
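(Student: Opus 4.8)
The plan is to obtain the corollary directly from Theorem~\ref{Th1} by specializing to the diagonal and summing over all nodes. First I would recall that a closed walk of length $n$ is, by definition, a walk of length $n$ whose ending node coincides with its starting node; in particular such a walk has a well-defined base node $i$, and a walk counted by $(A^n)_{i,i}$ is precisely a closed walk that starts and ends at $i$. Applying Theorem~\ref{Th1} with $j=i$ therefore identifies $(A^n)_{i,i}$ with the number of closed walks of length $n$ based at node $i$.

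Next I would note that, as $i$ ranges over the node set, these collections partition the set of all closed walks of length $n$: every closed walk belongs to exactly one of them, the one indexed by its unique base node. Summing the counts then gives the total number of closed walks of length $n$ as $\sum_i (A^n)_{i,i}$, which is by definition $\tr(A^n)$, completing the argument.

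The only point that warrants a moment's attention — and it is not really an obstacle — is the bookkeeping convention: one must treat a closed walk as carrying its base node as part of its data, so that no walk is counted more than once when the sum over $i$ is formed. This is the standard convention and is already implicit in the formulation of Theorem~\ref{Th1}, so with it in force the proof is immediate.
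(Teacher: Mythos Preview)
Your argument is correct and is exactly the approach the paper has in mind: the paper simply states that Theorem~\ref{Th1} ``immediately implies'' the corollary, and your specialization to $j=i$ followed by summation over $i$ is precisely that immediate implication spelled out.
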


It is often convenient to represent a sequence of numbers $a_0,a_1,a_2,\dots$ in the form of a \emph{generating function} $f(z)$ (of indeterminate $z$)
such that the coefficient of $z^n$ in $f(z)$ equals $a_n$ for all integers $n\geq 0$ (e.g., see \cite{wilf} for a nice introduction to generating functions).
In other words, $f(z)=a_0+a_1\cdot z+a_2\cdot z^2+\cdots$. The generating function for the number of closed walks has a neat algebraic expression:

\begin{theorem}[{\cite[Corollary 4.7.3]{stanley}}]\label{th:trgen}
For any $m\times m$ matrix $A$,
$$\sum_{n=0}^{\infty} \tr(A^{n})\cdot z^n = m-\frac{zF'(z)}{F(z)},$$
where $F(z) = \det(I_m - z\cdot A)$, and $I_m$ is the $m\times m$ identity matrix.
\end{theorem}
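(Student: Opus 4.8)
The plan is to pass to the eigenvalues of $A$ over $\mathbb{C}$ and reduce both sides of the claimed identity to the same explicit rational function. First I would fix $\lambda_1,\dots,\lambda_m$, the eigenvalues of $A$ listed with algebraic multiplicity. By Schur triangularization (or by passing to a Jordan form), $A$ is conjugate to an upper-triangular matrix $T$ carrying the $\lambda_i$ on its diagonal; since the trace is invariant under conjugation and $T^n$ is upper-triangular with diagonal entries $\lambda_i^n$, we obtain $\tr(A^n)=\sum_{i=1}^m \lambda_i^n$ for every $n\ge 0$. This is consistent at $n=0$, where $A^0=I_m$ and $\tr(I_m)=m=\sum_{i=1}^m \lambda_i^0$.

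Next I would expand the left-hand side. Working in the ring of formal power series in $z$ (equivalently, for $|z|$ smaller than $1/\max_i|\lambda_i|$), the geometric series yields
$$\sum_{n=0}^{\infty}\tr(A^n)\cdot z^n=\sum_{i=1}^m\sum_{n=0}^{\infty}(\lambda_i z)^n=\sum_{i=1}^m\frac{1}{1-\lambda_i z}.$$

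Then I would rewrite the right-hand side in the same form. Since $F(z)=\det(I_m-z\cdot A)=\prod_{i=1}^m(1-\lambda_i z)$ (the reversed characteristic-polynomial factorization, valid for all $z$ as a polynomial identity), its logarithmic derivative is $\frac{F'(z)}{F(z)}=\sum_{i=1}^m\frac{-\lambda_i}{1-\lambda_i z}$, so that
$$-\frac{zF'(z)}{F(z)}=\sum_{i=1}^m\frac{\lambda_i z}{1-\lambda_i z}=\sum_{i=1}^m\left(\frac{1}{1-\lambda_i z}-1\right).$$
Adding $m$ cancels the $m$ copies of $-1$ and returns exactly the expression obtained for the left-hand side, completing the proof. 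One should also note that $F(z)$ has constant term $\det(I_m)=1$, hence is a unit in the formal power series ring, so $F'(z)/F(z)$ is well defined there.

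I expect the only delicate point to be the identity $\tr(A^n)=\sum_i\lambda_i^n$ when $A$ is not diagonalizable; this is precisely why I would invoke triangularization rather than assume diagonalizability, and why I would spell out the $n=0$ case separately. A clean alternative that avoids eigenvalues altogether uses Jacobi's formula $\frac{d}{dz}\log\det M(z)=\tr\!\left(M(z)^{-1}M'(z)\right)$ with $M(z)=I_m-z\cdot A$ together with the Neumann expansion $M(z)^{-1}=\sum_{n\ge 0}z^n A^n$, giving $\frac{F'(z)}{F(z)}=-\tr\!\left(A\sum_{n\ge 0}z^nA^n\right)=-\frac1z\sum_{n\ge 1}\tr(A^n)\cdot z^n$, after which rearranging produces the stated formula; I would mention this route but present the eigenvalue computation as the main line of argument.
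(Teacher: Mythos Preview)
Your proof is correct. Note, however, that the paper does not actually supply its own proof of this statement: it is quoted from Stanley's \emph{Enumerative Combinatorics} (Corollary~4.7.3) and used as a black box, so there is no in-paper argument to compare against. Both routes you sketch---the eigenvalue/logarithmic-derivative computation and the Jacobi-formula/Neumann-series variant---are standard and valid derivations of this identity.
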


We will show how to put these nice results to good use by reducing 
some enumeration problems
to the counting of walks or closed walks in certain digraphs.

\section{Silent Circles}

One of our motivations for the present work was the elegant solution to
a problem originally posed by Philip Brocoum, who described the
following game as a preliminary event in a drama class he once attended at MIT.
The game was played repeatedly by all the students until silence was achieved.\footnote{Presumably, the teacher would participate only if the number of students was odd.}

An even number, $2n$, of people stand in a circle with their heads lowered.
On cue, everyone looks up and stares either at one of their two immediate neighbors
(left or right) or at the person diametrically opposed. If two people make eye
contact, both will scream! What is the probability that everyone will be silent?
For $n>1$,\footnote{The case $n=1$ is special, since the two immediate neighbors and the diametrically opposite person all coincide.}
since each person has $3$ choices, there are $3^{2n}$ possible configurations (which
are assumed to be equiprobable).
The problem then becomes just to count the number of \emph{silent configurations}.

Let us first do so in the slightly easier case of an \emph{$n$-prism} of people (we will return to the original problem later). 
This is a fancy way to say that the people are now arranged in two concentric circles each with $n$ people, where every person faces a
\emph{partner} on the other circle and is allowed to look either at that partner or at one of two neighbors on the same circle.

The key idea is to notice that the silent configurations are in one-to-one
correspondence with the closed walks of length $n$ in a certain digraph on $8$ nodes.
Indeed, there are $3^2-1=8$ different ways for the two partners in a pair to not make eye contact with each other. 
We call each such way a \emph{gaze} and denote it with a pair of arrows, one over another, indicating sight directions of the partners. 
Here the top arrow represents the person on the outer circle, while the bottom arrow represents the person on the inner circle.
An arrow pointing right indicates a person looking in the \emph{clockwise} direction (i.e., at the left neighbor on the outer circle or at the right neighbor on the inner circle).
Similarly, an arrow pointing left indicates a person looking in the \emph{counterclockwise} direction. 
Arrows pointing up or down indicate a person looking at the partner. We now build the \emph{gaze digraph}, 
whose nodes are the different gazes. There is an edge going from node $i$ to node $j$ if and only if gaze $j$ can be clockwise next to gaze $i$ in a silent configuration.

The gaze digraph and its adjacency matrix $A$ are shown in Figure~\ref{fig:gaze}. 
For example, gaze
$\left[{\rightarrow\atop\rightarrow}\right]$ denotes a pair of partners both looking at their clockwise neighbors. 
In a silent configuration, this pair can be clockwise followed by any pair, in which neither of the partners looks at the counterclockwise neighbor. 
That is, in the gaze graph, directed edges from node $\left[{\rightarrow\atop\rightarrow}\right]$ go to nodes 
$\left[{\rightarrow\atop\smash{\uparrow}\lefteqn{\phantom{\rightarrow}}}\right]$, $\left[{\downarrow\atop\rightarrow}\right]$, $\left[{\rightarrow\atop\rightarrow}\right]$ 
(in the last case, the edge forms a self-loop).

\begin{figure}[!t]
\begin{tabular}{cc}
\begin{tabular}{c}
\includegraphics[width=0.45\textwidth]{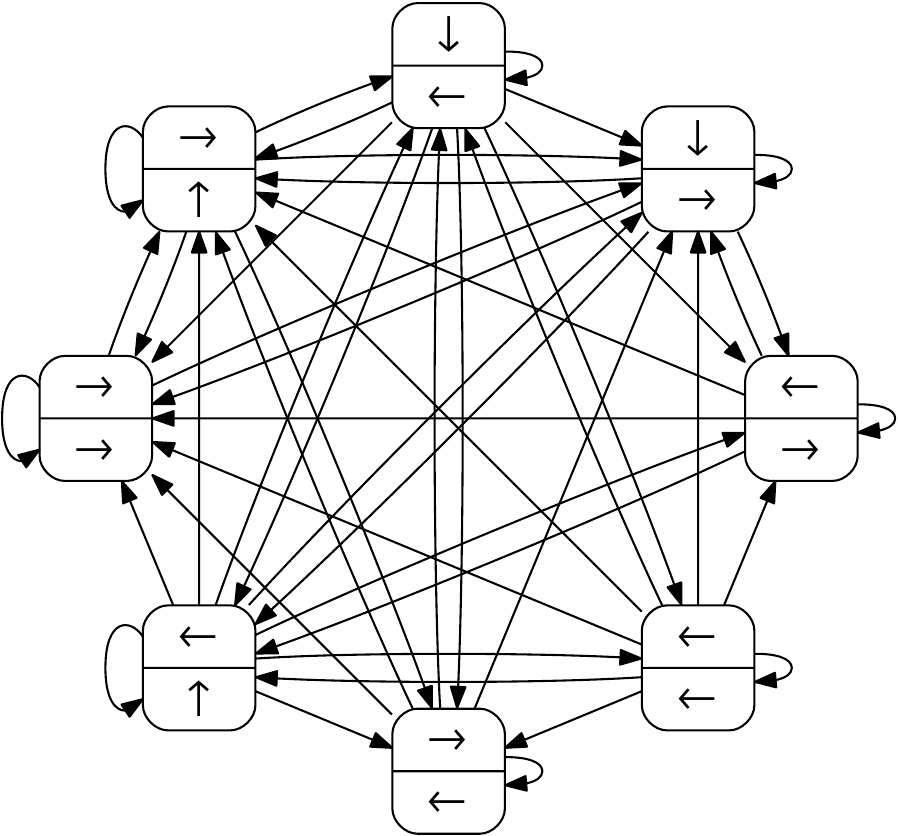}
\end{tabular}
&
\begin{tabular}{c||c|c|c|c|c|c|c|c}
 & ${\rightarrow\atop\rightarrow}$ & ${\leftarrow\atop\leftarrow}$ & ${\leftarrow\atop\smash{\uparrow}\lefteqn{\phantom{\rightarrow}}}$ & ${\downarrow\atop\rightarrow}$
 & ${\leftarrow\atop\rightarrow}$ & ${\rightarrow\atop\leftarrow}$ & ${\rightarrow\atop\smash{\uparrow}\lefteqn{\phantom{\rightarrow}}}$ & ${\downarrow\atop\leftarrow}$\\
\hline\hline
${\rightarrow\atop\rightarrow}$ &	\cellcolor{green}1 &	 0 &	0 &	1 &	0 &	0 &	1 &	0\\
\hline
${\leftarrow\atop\leftarrow}$  &	 1 &	\cellcolor{green}1 &	1 &	1 &	1 &	1 &	1 &	1\\
\hline
${\leftarrow\atop\smash{\uparrow}\lefteqn{\phantom{\rightarrow}}}$    &	1 & 	1 &	1 &	 1 &	1 &	1 &	1 &	\cellcolor{green}1\\
\hline
${\downarrow\atop\rightarrow}$ & 	1 &	0 &	 1 &	1 &	1 &	0 &	\cellcolor{green}1 &	0\\
\hline
${\leftarrow\atop\rightarrow}$ & 	1 &	0 &	1 &	1 &	1 &	\cellcolor{green}0 &	1 &	0\\
\hline
${\rightarrow\atop\leftarrow}$ & 	1 &	0 &	0 &	1 &	\cellcolor{green}0 &	1 &	1 &	1\\
\hline
${\rightarrow\atop\smash{\uparrow}\lefteqn{\phantom{\rightarrow}}}$ & 	1 &	0 &	0 &	\cellcolor{green}1 &	0 &	1 &	1 &	1\\
\hline
${\downarrow\atop\leftarrow}$ & 	1 &	1 &	\cellcolor{green}1 &	1 &	1 &	1 &	1 &	1
\end{tabular}
\end{tabular}
\caption{The gaze digraph and its adjacency matrix $A$.}
\label{fig:gaze}
\end{figure}

Let $t_n$ be the number of silent configurations of the $n$-gonal prism.
By Corollary~\ref{Th2}, we have $t_n=\tr(A^n)$. 
Theorem~\ref{th:trgen} further implies (by direct calculation) that

$$\sum_{n=0}^\infty t_n\cdot z^n = \frac{8 - 56z + 96z^2 - 50z^3 + 4z^4}{1-8z+16z^2-10z^3+z^4}.$$

From this generating function, we can easily derive a recurrence relation for $t_n$. Multiply the generating function by $1-8z+16z^2-10z^3+z^4$ to get

$$(1-8z+16z^2-10z^3+z^4)\cdot \sum_{n=0}^\infty t_n\cdot z^n = 8 - 56z + 96z^2 - 50z^3 + 4z^4.$$

For $n\geq 5$, the equality of the coefficients of $z^n$ on the left-hand and right-hand sides gives

$$t_n - 8t_{n-1} + 16t_{n-2} - 10t_{n-3} + t_{n-4} = 0.$$

The values of $t_n$ form the sequence \texttt{A141384} in the OEIS~\cite{oeis}.

Returning to the original problem, let $s_n$ be the number of silent configurations of a circle with $2n$ people.
In this problem, each gaze is formed by two diametrically opposite people on the circle.
For $n>1$, a silent configuration is therefore defined by a walk of length $n$, 
where the starting and ending nodes represent the same pair of people in a different order.
In other words, the starting and ending gazes must be obtained from each other by a vertical flip.
The entries of the adjacency matrix in Figure~\ref{fig:gaze} corresponding to such gaze flips are colored green. 
By Theorem~\ref{Th1}, the number $s_n$ equals the sum of the elements in these entries in the matrix $A^n$.
Since the minimal polynomial of $A$ is
$$x^5 - 8 x^4 + 16 x^3 - 10 x^2 + x,$$
the sequence $s_n$ (sequence \texttt{A141221} in the OEIS~\cite{oeis}) satisfies the recurrence relation:
$$s_{n} = 8 s_{n-1} - 16 s_{n-2} + 10 s_{n-3} - s_{n-4},\qquad n\geq 6,$$
which matches that for $t_n$. 
Taking into account the initial values of $s_n$ for $n=2,3,4,5$, we further deduce the generating function
$$\sum_{n=2}^\infty s_n\cdot z^n = \frac{30z^2-84z^3+58z^4-6z^5}{1-8z+16z^2-10z^3+z^4}.$$

We give initial numerical values of the sequences $t_n$ and $s_n$,
along with the corresponding probabilities of silent configurations, in the table below. 
Quite remarkably, we have $t_n = s_n + 2$ for all $n>1$.
It further follows that both probabilities $\nicefrac{t_n}{3^{2n}}$ and $\nicefrac{s_n}{3^{2n}}$ grow as 
$(\nicefrac{\alpha}{9})^n \approx 0.5948729^n$, where 
$$
\alpha = \frac{1}{3} \left(7 + 2\cdot \sqrt{22}\cdot \cos\left(\frac{\arctan(\nicefrac{\sqrt{5319}}{73})}{3}\right) \right) \approx 5.353856
$$ 
is the largest zero of the minimal polynomial of $A$.

\

\begin{center}
\begin{footnotesize}
\begin{tabular}{|c||c|c|c|c|c|c|c|c|c|}
\hline
$n$ & 2 & 3 & 4 & 5 & 6 & 7 & 8 & 9 & 10 \\
\hline\hline
$t_n$ & 32 & 158 & 828 & 4408 & 23564 & 126106 & 675076 & 3614144 & 19349432 \\
\hline
$\nicefrac{t_n}{3^{2n}}$ & 0.395 & 0.217 & 0.126 & 0.075 & 0.044 & 0.026 & 0.016 & 0.009 & 0.006 \\
\hline\hline
$s_n$ & 30 & 156 & 826 & 4406 & 23562 & 126104 & 675074 & 3614142 & 19349430 \\
\hline
$\nicefrac{s_n}{3^{2n}}$ & 0.370 & 0.214 & 0.126 & 0.075 & 0.044 & 0.026 & 0.016 & 0.009 & 0.006 \\
\hline
\end{tabular}
\end{footnotesize}
\end{center}

\section{Hamiltonian Cycles in Antiprism Graphs}

An \emph{antiprism graph} represents the skeleton of an antiprism. The $n$-antiprism graph (defined for $n\geq 3$) has $2n$ nodes and $4n$ edges. 
Its nodes can be placed around a circle and enumerated with the numbers from $0$ to $2n-1$ 
such that each node $i$ ($i=0,1,\dots,2n-1$) is connected to nodes\footnote{From now on, we assume that arithmetic operations on node labels are done modulo $2n$.} $i\pm 1$ and $i\pm 2$
(an example for $n=5$ is shown in Figure~\ref{fig:anti}a). 
These graphs represent a special case of the more general \textit{circulant graphs} and are denoted $C_{2n}^{1,2}$ (here the subscript specifies the number of nodes, while the 
superscript describes the pattern for edges).

\begin{figure}[!t]
\begin{tabular}{cc}
\includegraphics[width=0.45\textwidth]{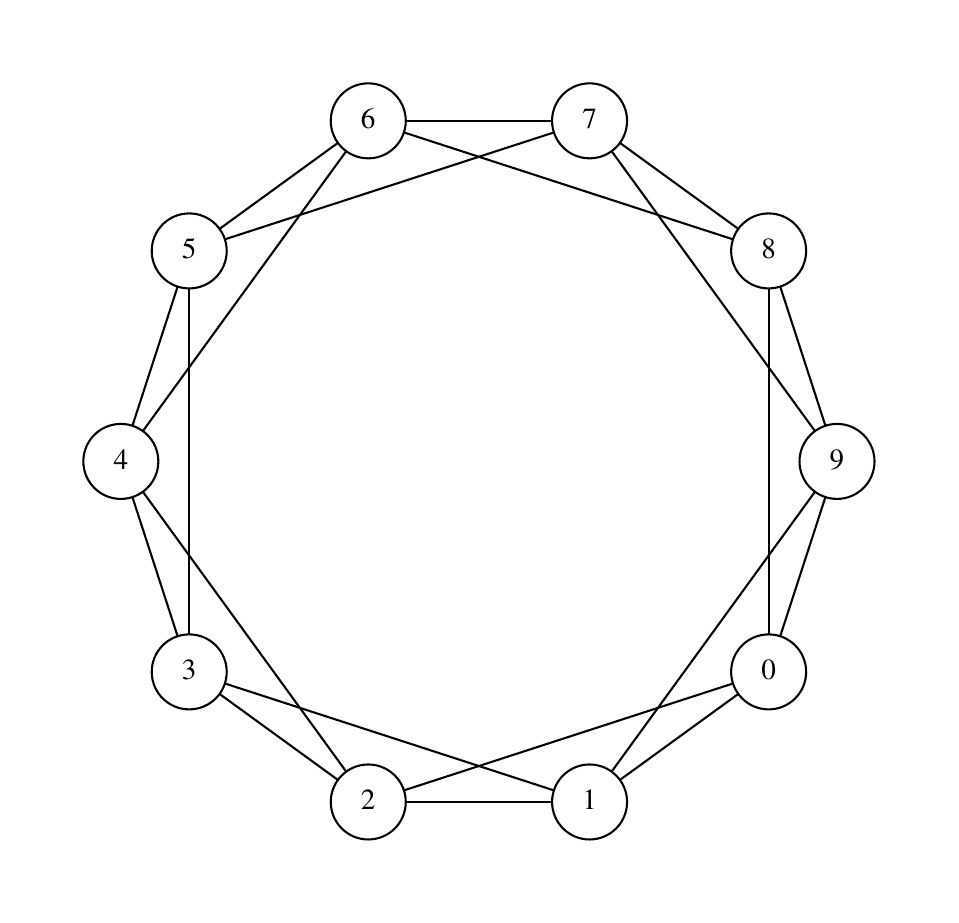}
&
\includegraphics[width=0.45\textwidth]{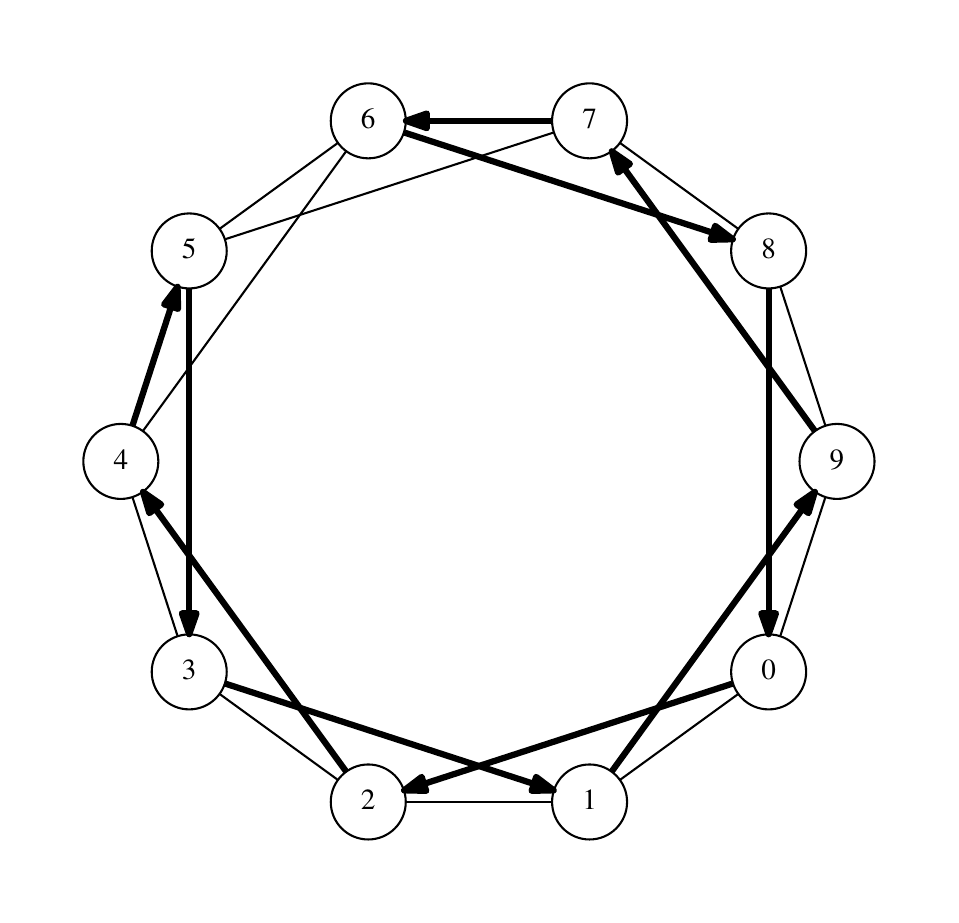}
\\
\textbf{(a)} & \textbf{(b)}
\end{tabular}
\caption{\textbf{(a)} The antiprism graph $C_{10}^{1,2}$. 
\textbf{(b)} A directed Hamiltonian cycle in $C_{10}^{1,2}$ that does not visit either of the edges $(4,6)$, $(5,6)$, $(5,7)$, i.e., has signature $000$ at node 4.}
\label{fig:anti}
\end{figure}

A \emph{cycle} is a closed walk without repeated edges, up to a choice of a starting/ending node.
A cycle is \emph{Hamiltonian} if it visits every node in the graph exactly once. 
A recurrence formula for the number of Hamiltonian cycles in $C_{2n}^{1,2}$ was first obtained by Golin and Leung \cite{golin}. Here we present a simpler derivation for the same formula.

For a subgraph $G$ of $C_{2n}^{1,2}$, we define the \emph{visitation signature} of $G$ at node $i$ as a triple of binary digits describing whether edges $(i,i+2)$, $(i+1,i+2)$, $(i+1,i+3)$ 
are visited by $G$, where digits $1/0$ mean visited/non-visited. Notice that these three edges form a path of length three in $C_{2n}^{1,2}$, as illustrated in Figure~\ref{fig:vis}. 
For example, a visitation signature $010$ means the second edge is in $G$, while the first and third edges are not.

A Hamiltonian cycle $Q$ (viewed as a subgraph) in $C_{2n}^{1,2}$ has one of the following two types:
\begin{itemize}
\item[(T1)]
There exists $i$ such that the visitation signature of $Q$ at $i$ is $000$;
\item[(T2)] 
For every $i$, the visitation signature of $Q$ at $i$ is not $000$.
\end{itemize}
Let us first enumerate Hamiltonian cycles of type (T1).
If a Hamiltonian cycle $Q$ has the visitation signature $000$ at $i$ (an example for $n=5$ and $i=4$ is given in Figure~\ref{fig:anti}b), 
then $Q$ must contain edges $(i+2,i+3)$ and $(i+2,i+4)$. It further follows that $Q$ must contain edges $(i+3,i+5)$ and $(i+4,i+6)$, 
and so on. Eventually, we conclude that $Q$ in this case is formed by two interweaving paths between nodes $i+2$ and $i+1$.
So, there exist exactly two directed Hamiltonian cycles (of opposite directions) that have visitation signature $000$ at $i$, and the value of $i$ is unique for such cycles.
In other words, there are two directed Hamiltonian cycles of type (T1) for each of the $2n$ nodes, totaling in $4n$ of such cycles.
Their generating function is 
\begin{equation}\label{eq:T1gen}
\sum_{n=3}^{\infty} 4n\cdot z^n = \frac{4z^3(3-2z)}{(1-z)^2}.
\end{equation}

To enumerate Hamiltonian cycles of type (T2), we need the following lemma:

\begin{figure}[!t]
\begin{tabular}{cccc}
\includegraphics[width=0.23\textwidth]{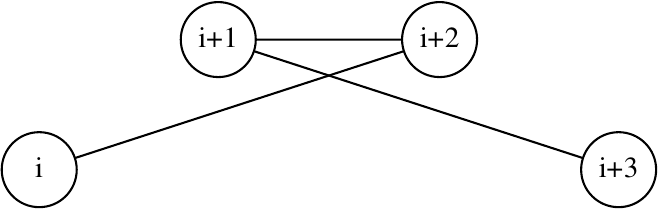} & \includegraphics[width=0.23\textwidth]{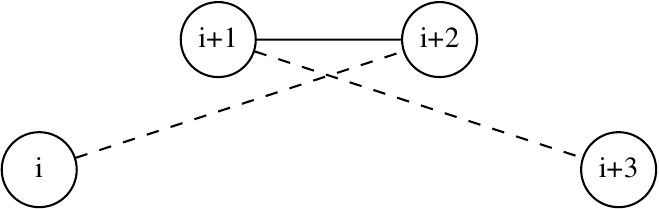} & \includegraphics[width=0.23\textwidth]{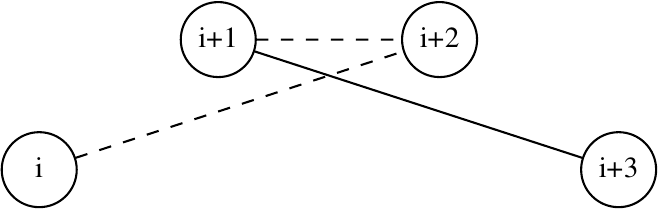} & \includegraphics[width=0.23\textwidth]{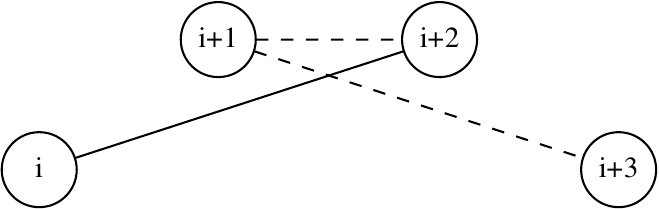} \\
$111$ & $010$ & $001$ & $100$ 
\end{tabular}
\caption{Possible visitation signatures for a Hamiltonian cycle of type (T2) in $C_{2n}^{1,2}$. Visited and non-visited edges are shown as solid and dashed, respectively.}
\label{fig:vis}
\end{figure}

\begin{lemma}\label{lem:no000} A subgraph $Q$ of $C_{2n}^{1,2}$ is a Hamiltonian cycle of type (T2) if and only if
(i) 
every node of $C_{2n}^{1,2}$ is incident to exactly two edges in $Q$; and 
(ii) the visitation signature of $Q$ at every node is $111$, $001$, $010$, or $100$ (shown in Figure~\ref{fig:vis}).
\end{lemma}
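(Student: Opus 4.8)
The plan is to prove the biconditional in two directions, treating the ``only if'' direction as essentially a finite case-check and the ``if'' direction as the substantive claim. For the forward direction, suppose $Q$ is a Hamiltonian cycle of type (T2). Since $Q$ is a Hamiltonian cycle, every node lies on exactly two edges of $Q$, which is (i). For (ii), I would observe that the visitation signature at node $i$ records the occupation of the three edges $(i,i+2)$, $(i+1,i+2)$, $(i+1,i+3)$, so a priori there are $2^3=8$ possible signatures; type (T2) excludes $000$ by definition, so I must rule out the three remaining ``bad'' signatures $110$, $101$, $011$. The key local constraint is that node $i+2$ has degree exactly two in $Q$, and its four incident edges in $C_{2n}^{1,2}$ are $(i,i+2)$, $(i+1,i+2)$, $(i+2,i+3)$, $(i+2,i+4)$; two of these four are controlled by the signature at $i$. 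A short argument shows each of $110$, $101$, $011$ forces a contradiction: e.g.\ signature $011$ puts edges $(i+1,i+2)$ and $(i+1,i+3)$ in $Q$ but not $(i,i+2)$, and then completing degree-$2$ at node $i+1$ and at node $i+2$ forces an immediate short cycle or a node of degree $3$; similarly for $110$ and $101$. I would present this as one or two lines per case rather than exhaustively.

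For the reverse direction, assume $Q$ satisfies (i) and (ii). Condition (i) already says $Q$ is a disjoint union of cycles covering all $2n$ nodes (a $2$-regular spanning subgraph), so the only thing left to prove is that $Q$ is connected, i.e.\ a single cycle, and that it is of type (T2) rather than (T1) --- but the latter is immediate, since (ii) forbids signature $000$ at every node, so $Q$ is automatically not of type (T1) once we know it is Hamiltonian. Thus the heart of the matter is \emph{connectivity}. Here I would argue that the allowed signatures $111$, $001$, $010$, $100$ are exactly the ones with an \emph{odd} number of visited edges among the triple, and more usefully, I would track how the cycle structure ``propagates'' around the circle: reading the signatures at $i=0,1,\dots,2n-1$ in order, each signature together with condition (i) at the relevant nodes determines, in a transfer-matrix-like fashion, which of a few local ``states'' the cycle is in as it crosses each cut of the circular arrangement. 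The plan is to set up a small finite automaton on these states and show that the only way to return consistently to the starting state after going around is via a configuration in which all $2n$ nodes lie on one cycle; the three excluded signatures were precisely the ones that would have allowed the spanning $2$-regular subgraph to split into two or more shorter cycles.

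Concretely, for the connectivity step I would define, for each $i$, the ``interface'' between the part of $Q$ on nodes $\{i+1,i+2,\dots\}$ (cyclically, a contiguous block) and the rest, recording which of the edges crossing the cut $(i,i+1)\mid(i+2,i+3)\mid\cdots$ are used; condition (i) limits this to a bounded number of possibilities, and conditions (i)+(ii) give a deterministic local update rule. Then I would show (a) the update rule is a bijection on interface states, so going once around the circle yields a permutation, and (b) any fixed configuration of interfaces that is consistent all the way around and uses only signatures $\{111,001,010,100\}$ corresponds to a single Hamiltonian cycle. I expect the main obstacle to be organizing this case analysis cleanly: there are enough local edge-occupancy patterns that a naive enumeration is messy, so the real work is choosing the right notion of ``interface state'' (probably just which of the two ``long'' edges $(i,i+2)$ and $(i+1,i+3)$ straddling the cut are present, giving at most a handful of states) so that the propagation argument becomes short and the forbidden signatures $110$, $101$, $011$ visibly correspond to the only ways connectivity could fail or a node could get degree $\neq 2$. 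Once the interface formalism is fixed, both the contradiction arguments in the forward direction and the connectivity argument in the reverse direction reduce to inspecting this small transition structure.
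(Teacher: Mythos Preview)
Your forward-direction sketch has a real gap. For signature $011$ at node $i$ you assert that completing the degree-$2$ condition at nodes $i+1$ and $i+2$ ``forces an immediate short cycle or a node of degree $3$'', but neither happens: node $i+1$ already has degree exactly $2$ from $(i+1,i+2)$ and $(i+1,i+3)$, and node $i+2$ can legally take its second edge from $(i+2,i+3)$ or $(i+2,i+4)$ with no local contradiction. The paper's argument is different and one line: since $i+1$ is saturated, the edges $(i-1,i+1)$ and $(i,i+1)$ must be absent, and together with the absence of $(i,i+2)$ this makes the signature at node $i-1$ equal to $000$, contradicting (T2). Symmetrically, $110$ at $i$ forces $000$ at $i+1$. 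For $101$ the paper argues that the degree-$2$ constraints then force the $4$-cycle $(i,i+2,i+3,i+1)$, impossible for $n\ge 3$. So the right mechanism for $011$ and $110$ is ``bad signature here $\Rightarrow$ signature $000$ next door'', not a local degree blow-up.

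Your reverse direction is plausible in outline but far heavier than needed, and the step where an interface automaton is supposed to certify global \emph{connectivity} (as opposed to mere local consistency) is exactly the part you leave unspecified; transfer-matrix states normally record edge occupancies across a cut, not whether the pieces on the two sides lie in a single component. The paper bypasses all of this with a short direct argument. Take a connected component $Q'$ of the $2$-regular subgraph $Q$; it is a cycle. One first shows that for every $i$, $Q'$ contains node $i+1$ or else contains both $i$ and $i+2$: otherwise, walking forward inside $Q'$ one reaches a node $i\in Q'$ with $i+1,i+2\notin Q'$, whose only $Q$-edges must then be $(i-2,i)$ and $(i-1,i)$, giving signature $000$ at $i-1$. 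Hence either $Q'$ hits every node, or $Q'$ ``skips'' some node $i$ while containing $i-1$ and $i+1$; in that case the component through $i$ must interleave with $Q'$ along alternate labels, which forces signature $101$ at $i$, contradicting (ii). No automaton or interface bookkeeping is required.
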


\begin{proof}
If $Q$ is a Hamiltonian cycle of type (T2), then condition (i) trivially holds.  We establish condition (ii), by showing 
that no other visitation signatures besides $111$, $001$, $010$, and $100$ are possible in $Q$.
Notice that: 
\begin{itemize}
\item The signature $000$ cannot happen anywhere in $Q$ by the definition of type (T2).
\item The signature $011$ at node $i$ implies the signature $000$ at node $i-1$.
\item The signature $110$ at node $i$ implies the signature $000$ at node $i+1$.
\item The signature $101$ at node $i$ implies the presence of edges $(i,i+1)$ and $(i+2,i+3)$ in $Q$; that is, $Q$ must coincide with the cycle $(i,i+2,i+3,i+1,i)$, a contradiction to $n\geq 3$.
\end{itemize}

Now, let $Q$ be a subgraph of $C_{2n}^{1,2}$ satisfying conditions (i) and (ii).
Let $Q' \subset Q$ be a connected component of $Q$.
Since every node is incident to two edges from $Q$, $Q'$ represents a cycle in $C_{2n}^{1,2}$. 

We claim that for any node $i$ of $C_{2n}^{1,2}$, $Q'$ contains either node $i+1$, or both of the nodes $i$ and $i+2$.
Indeed, if this statement does not hold for all nodes, then starting at a node belonging to $Q'$ and increasing its label by 1 or 2, keeping it in $Q'$, 
we can reach a node $k$ in $Q'$ such that neither $k+1$, nor $k+2$ are in $Q'$. 
Then $Q'$ (and $Q$) contains edges $(k-2,k)$ and $(k-1,k)$, and since every node in $Q$ is incident to exactly two edges, $Q$ does not contain edges $(k-1,k+1)$, $(k,k+1)$, and $(k,k+2)$. 
That is, the visitation signature of $Q$ at node $k-1$ is $000$, a contradiction to condition (ii), which proves our claim.

We say that $Q'$ \emph{skips} node $i$ if it contains nodes $i-1$ and $i+1$, but not $i$. 
If $Q'$ skips node $i$, consider a connected component $Q''$ of $Q$ that contains node $i$. By the aforementioned claim, the nodes of $Q'$ 
and $Q''$ must interweave, i.e., $Q' = (i-1, i+1, i+3, \dots)$ and $Q'' = (i, i+2, i+4, \dots)$. 
Then the signature of $Q$ at node $i$ is $101$, a contradiction to condition (ii), proving that $Q'$ cannot skip nodes. 
So, $Q'$ contains all the nodes of $C_{2n}^{1,2}$, and thus $Q'=Q$ represents a Hamiltonian cycle in $C_{2n}^{1,2}$.
\end{proof}

Lemma~\ref{lem:no000} allows us to obtain the number of Hamiltonian cycles of type (T2) in $C_{2n}^{1,2}$ as the number of subgraphs $Q$ satisfying conditions (i) and (ii).
To compute the number of such subgraphs, we construct a directed graph $S$ on the four allowed visitation signatures as nodes, where there is a directed edge $(s_1,s_2)$ 
whenever the signatures $s_1$ and $s_2$ can happen in $Q$ at two consecutive nodes. 
The graph $S$ and its adjacency matrix $B$ are shown in Figure~\ref{fig:sig}.

\begin{figure}[!t]
\begin{center}
\begin{tabular}{ccc}
\begin{tabular}{c}
\includegraphics[width=0.3\textwidth]{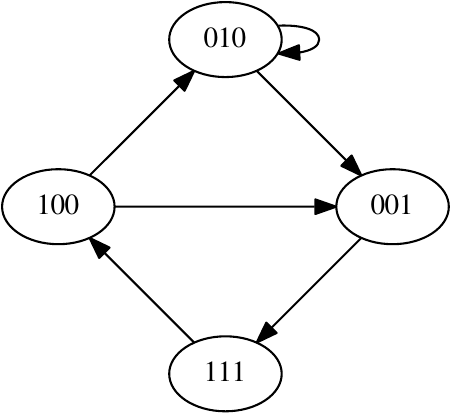}
\end{tabular}
&~\qquad~&
$
\begin{array}{c|cccc}
& 111 & 010 & 001 & 100 \\
\hline
111 & 0 & 0 & 0 & 1\\
010 & 0 & 1 & 1 & 0\\
001 & 1 & 0 & 0 & 0\\
100 & 0 & 1 & 1 & 0
\end{array}
$
\end{tabular}
\end{center}
\caption{The signature graph $S$ and its adjacency matrix $B$.}
\label{fig:sig}
\end{figure}

By Lemma~\ref{lem:no000} and Corollary~\ref{Th2}, the number of Hamiltonian cycles of type (T2) in $C_{2n}^{1,2}$ equals $\tr(B^{2n})$. 
Correspondingly, the total number of directed Hamiltonian cycles $h_n$ in $C_{2n}^{1,2}$ equals $4n + 2\tr(B^{2n})$; 
its generating function (derived from \eqref{eq:T1gen} and Theorem~\ref{th:trgen}) is
$$
\sum_{n=3}^{\infty} h_n\cdot z^n 
= \frac{4z^3(3-2z)}{(1-z)^2} + \frac{2z^3(10+11z+5z^2)}{1-z-2z^2-z^3}
= \frac{2z^3(16 - 19z -15z^2 + 3z^3 + 9z^4)}{(1-z)^2(1-z-2z^2-z^3)}.
$$
It further implies that the sequence $h_n$ satisfies the recurrence relation:
$$h_n = 3 h_{n-1} - h_{n-2} - 2h_{n-3} + h_{n-5},\qquad n\geq 8,$$
with the initial values $32, 58, 112, 220, 450, \dots$ for $n=3,4,\dots$ (sequence \texttt{A124353} in the OEIS~\cite{oeis}).

\section{Hamiltonian Cycles and Paths in Arbitrary Graphs}

Similarly to a Hamiltonian cycle, a \emph{Hamiltonian path} in a graph visits every node exactly once.  
Enumeration of Hamiltonian paths/cycles in an arbitrary graph represents a famous NP-complete problem. 
That is, one can hardly hope for the existence of an efficient (i.e., polynomial-time) algorithm for this enumeration 
and thus has to rely on less efficient algorithms of (sub)exponential time complexity. 
Below, we describe such a not-so-efficient, but very neat and simple algorithm,\footnote{We are not aware if this algorithm has been described in the literature before, 
but based on its simplicity we suspect that this may be the case.}
which is based on the transfer-matrix method and another basic combinatorial enumeration method called inclusion-exclusion (e.g., see \cite[Section~2.1]{stanley}).

We denote the number of (directed) Hamiltonian cycles and paths
in a graph $G$ by $\HC(G)$ and $\HP(G)$, 
respectively.

\begin{theorem} Let $G$ be a graph with node set $V$ and let $A$ be the adjacency matrix of $G$. 
Then
\begin{equation}\label{eq:HP}
\HP(G) = \sum_{S\subset V} (-1)^{|S|}\cdot \SUM\left(A_{V\setminus S}^{n-1}\right)
\end{equation}
and
\begin{equation}\label{eq:HC}
\HC(G) = \frac{1}{n} \sum_{S\subset V} (-1)^{|S|}\cdot \tr\left(A_{V\setminus S}^n\right),
\end{equation}
where $\SUM(M)$ denotes the sum of all\,\footnote{Alternatively, we can define $\SUM(M)$ as the sum of 
all \emph{non-diagonal} elements of $M$; formula \eqref{eq:HP} still holds in this case.} 
elements of a matrix $M$.
\end{theorem}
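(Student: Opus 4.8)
The plan is to use inclusion–exclusion on the set of nodes to count walks that visit all of $V$, combined with Theorem~\ref{Th1} and Corollary~\ref{Th2} to count walks in induced subgraphs. For the formula \eqref{eq:HP}, I would start from the observation that a (directed) Hamiltonian path in $G$ is exactly a walk of length $n-1$ from some node $v_i$ to some node $v_j$ that visits all $n$ nodes; since it has length $n-1$ and visits $n$ distinct nodes, it automatically has no repeated edges. So $\HP(G)$ equals the number of walks of length $n-1$ in $G$ whose vertex set is all of $V$. The key step is then the identity
\begin{equation*}
\#\{\text{walks of length }n-1\text{ in }G\text{ using every node}\} = \sum_{S\subset V} (-1)^{|S|}\cdot \#\{\text{walks of length }n-1\text{ in }G\text{ avoiding every node in }S\}.
\end{equation*}
This is standard inclusion–exclusion: for each subset $S$, the walks avoiding all nodes of $S$ are precisely the walks of length $n-1$ in the induced subgraph $G[V\setminus S]$, whose adjacency matrix is $A_{V\setminus S}$; by Theorem~\ref{Th1}, the number of such walks between a fixed ordered pair of nodes is the corresponding entry of $A_{V\setminus S}^{n-1}$, and summing over all ordered pairs of endpoints gives $\SUM(A_{V\setminus S}^{n-1})$. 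To justify the inclusion–exclusion sign pattern rigorously, I would fix a walk $W$ of length $n-1$ in $G$, let $T\subset V$ be the set of nodes it does \emph{not} visit, and check that $W$ is counted in the right-hand side with total coefficient $\sum_{S\subset T}(-1)^{|S|}$, which equals $1$ if $T=\emptyset$ (i.e.\ $W$ is Hamiltonian) and $0$ otherwise.

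For the formula \eqref{eq:HC}, the argument is parallel but uses closed walks. A directed Hamiltonian cycle, when rooted and oriented, corresponds to a closed walk of length $n$ visiting all $n$ nodes; conversely every closed walk of length $n$ visiting all $n$ nodes is a Hamiltonian cycle with a marked starting node, and each undirected-as-a-cyclic-sequence Hamiltonian cycle in a given direction arises from exactly $n$ such marked closed walks (one per choice of base node). Hence $\HC(G) = \frac{1}{n}\cdot\#\{\text{closed walks of length }n\text{ using every node}\}$. Applying the same inclusion–exclusion over $S\subset V$ and using Corollary~\ref{Th2} (the number of closed walks of length $n$ in $G[V\setminus S]$ is $\tr(A_{V\setminus S}^n)$) yields \eqref{eq:HC}. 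I would also remark on the footnote: replacing $\SUM$ by the sum of only the non-diagonal entries removes exactly the closed walks of length $n-1$; since $n-1 < n$, no such closed walk can visit all $n$ nodes, so those terms contribute $0$ to the inclusion–exclusion alternating sum and \eqref{eq:HP} is unaffected.

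The main obstacle, and the point that deserves the most care, is the bookkeeping in the inclusion–exclusion step: one must be precise that ``walks avoiding $S$'' are in bijection with walks in the induced subgraph $G[V\setminus S]$ (so that transfer-matrix counting applies), and that the summand for $S$ genuinely counts \emph{all} length-$(n-1)$ (resp.\ length-$n$) walks in $G[V\setminus S]$ regardless of whether they hit other nodes of $V\setminus S$ — it is the alternating sum over $S$, not any single term, that isolates the Hamiltonian walks. A secondary subtlety is the length-versus-distinctness point: it is precisely because a walk of length $n-1$ (resp.\ a closed walk of length $n$) that touches $n$ distinct vertices cannot repeat an edge that these walks are automatically Hamiltonian paths (resp.\ cycles), which is what lets us drop the ``no repeated edges'' condition and count with pure transfer-matrix machinery. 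Finally, for \eqref{eq:HC} one should double-check the factor $1/n$ by confirming that a directed Hamiltonian cycle with $n\geq 3$ nodes has exactly $n$ distinct rooted representatives and no extra symmetry inflates the count; for $n\le 2$ the statement is vacuous or degenerate and can be excluded or handled separately.
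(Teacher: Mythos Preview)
Your proposal is correct and follows essentially the same approach as the paper: inclusion--exclusion over subsets $S\subset V$ to isolate walks that hit every node, combined with Theorem~\ref{Th1} and Corollary~\ref{Th2} to count walks in the induced subgraphs $G[V\setminus S]$. The only cosmetic difference is in deriving \eqref{eq:HC}: the paper first fixes a base node $v_1$, obtains $\HC(G)=\sum_{S\subset V\setminus\{v_1\}}(-1)^{|S|}(A_{V\setminus S}^n)_{1,1}$, and then averages over the choice of base node to produce the trace, whereas you count all rooted closed walks at once and divide by $n$; these are equivalent bookkeeping choices, and your justification of the footnote about non-diagonal entries is a nice addition the paper omits.
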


\begin{proof}
First, we notice that a Hamiltonian path in $G$ is the same as a walk of length $n-1$ that visits every node.
Indeed, a walk of length $n-1$ visits $n$ nodes, and if it visits every node in $G$, then it must visit each node only once. 
That is, such a walk is a Hamiltonian path.

For a subset $S\subset V$, we define $P_S$ as the set of all walks of length $n-1$ in $G$ that do not visit any node from $S$. 
Then by the principle of inclusion-exclusion, the number of Hamiltonian paths $\HP(G)$ is given by
$$\HP(G) = \sum_{S\subset V} (-1)^{|S|}\cdot |P_S|.$$
To use this formula for computing $\HP(G)$, it remains to evaluate $|P_S|$ for every $S\subset V$.  

Let $G_{V\setminus S}$ be the graph obtained from $G$ by removing all nodes (along with their incident edges) present in $S$, 
and let $A_{V\setminus S}$ be the adjacency matrix of $G_{V\setminus S}$. 
Then the elements of $P_S$ are nothing else but the walks of length $n-1$ in the graph $G_{V\setminus S}$. 
Hence, by Theorem~\ref{Th1}, $|P_S|$ equals $\SUM\left(A_{V\setminus S}^{n-1}\right)$,
which implies formula \eqref{eq:HP}.

Similarly, a Hamiltonian cycle in $G$ can be viewed as a closed walk of length $n$ that starts/ends at a node $v\in V$
and visits all nodes.
Hence, the number $\HC(G)$ of Hamiltonian cycles in $G$ can be computed by the formula
$$\HC(G) = \sum_{S\subset V\setminus\{v\}} (-1)^{|S|}\cdot \left(A_{V\setminus S}^n\right)_{v,v}.$$
Similar formulae hold if we view closed walks as starting/ending at a different node $v'\in V$. Averaging over the nodes in $V$ gives us formula \eqref{eq:HC}.
\end{proof}

Formulae \eqref{eq:HP} and \eqref{eq:HC} provide a practical method for 
computing $\HP(G)$ and $\HC(G)$, although they have exponential time complexity as
they sum $2^n$ terms (indexed by the subsets $S\subset V$). 
On a technical note, the matrix $A_{V\setminus S}$ can be obtained directly from the adjacency matrix $A$ of $G$ by removing the rows and columns corresponding to the nodes in $S$.

In an undirected graph $G$, the number of undirected Hamiltonian paths and cycles is given by $\frac{1}{2}\HP(G)$ and $\frac{1}{2}\HC(G)$, respectively.

\section{Simple Cycles and Paths of a Fixed Length}

Our approach for enumeration of Hamiltonian paths/cycles can be further extended to enumeration of \emph{simple} (i.e., visiting every node at most once) paths/cycles of a fixed length. 
We refer to simple paths and cycles of length $k$ as $k$-paths and $k$-cycles. 
We denote the number of (directed) $k$-cycles and $k$-paths in a graph $G$ by $\SC_k(G)$ and $\SP_k(G)$, respectively.

\begin{theorem}
Let $G$ be a graph with a node set $V=\{v_1,\dots,v_n\}$ and let $A$ be the adjacency matrix of $G$. 
Then, for an integer $k\geq 1$, 
\begin{equation}\label{eq:SC}
\SC_k(G) = \frac{1}{k} \sum_{T\subset V} \binom{n-|T|}{k-|T|}\cdot (-1)^{k-|T|}\cdot \tr\left(A_T^k\right)
\end{equation}
and
\begin{equation}\label{eq:SP}
\SP_k(G) = \sum_{T\subset V} \binom{n-|T|}{k+1-|T|}\cdot (-1)^{k+1-|T|}\cdot \SUM\left(A_T^k\right).
\end{equation}
\end{theorem}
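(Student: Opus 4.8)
The plan is to mirror the Hamiltonian-path/cycle argument from the previous theorem, but with an extra layer of inclusion–exclusion to account for the fact that a $k$-path or $k$-cycle uses only $k+1$ (resp.\ $k$) of the $n$ nodes, and we do not know in advance which ones. First I would fix the basic observation: a $k$-path in $G$ is the same as a walk of length $k$ that visits exactly $k+1$ distinct nodes, and a $k$-cycle through a fixed node $v_1$ is the same as a closed walk of length $k$ based at $v_1$ visiting exactly $k$ distinct nodes. As before, dividing by $k$ after summing the based counts over all $n$ possible basepoints handles the cyclic ambiguity for $\SC_k$, and for $\SP_k$ one works directly with $\SUM(\cdot)$ since a path has a genuine start and end.

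The key step is to set up the correct inclusion–exclusion. For a subset $U\subset V$, let $N(U)$ be the number of walks of length $k$ in $G$ whose node set is \emph{contained in} $U$; by Theorem~\ref{Th1} this is $\SUM(A_U^k)$ (and $\tr(A_U^k)$ for closed walks based suitably). What we actually want is the number of walks whose node set is \emph{exactly} some $(k+1)$-element set, summed over all such sets — i.e., walks hitting exactly $k+1$ nodes. I would first count, for each fixed target set $W$ with $|W|=k+1$, the walks with node set exactly $W$ via inclusion–exclusion over subsets of $W$: this gives $\sum_{U\subseteq W}(-1)^{|W|-|U|}\SUM(A_U^k)$. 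Summing this over all $W$ of size $k+1$ and swapping the order of summation, each $U$ with $|U|=j$ gets counted once for every $W\supseteq U$ of size $k+1$, of which there are $\binom{n-j}{k+1-j}$, with sign $(-1)^{k+1-j}$. Re-indexing by $T=U$ yields exactly \eqref{eq:SP}; the cycle case is identical with $k+1$ replaced by $k$, $\SUM$ by $\tr$, and an extra global factor $\tfrac1k$ from averaging over basepoints, giving \eqref{eq:SC}.

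The main obstacle, and the point that needs to be stated carefully rather than waved through, is the double-counting bookkeeping in the last swap: one must verify that a walk hitting exactly $k+1$ nodes is counted with total coefficient $+1$ and a walk hitting fewer nodes with total coefficient $0$, across the combined effect of the inner inclusion–exclusion and the outer sum over target sets $W$. Concretely, a walk with node set $W_0$ of size $r\le k+1$ contributes to the term indexed by $T$ exactly when $T\subseteq W_0$, and its net coefficient is $\sum_{T\subseteq W_0}\binom{n-|T|}{k+1-|T|}(-1)^{k+1-|T|}$; one checks this equals $[r=k+1]$ by a standard binomial identity (e.g.\ grouping by $|T|=j$ gives $\sum_{j=0}^r \binom{r}{j}\binom{n-j}{k+1-j}(-1)^{k+1-j}$, which vanishes for $r<k+1$ and equals $1$ for $r=k+1$). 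I would present this identity verification as the crux, then note that for $\SC_k$ the identical coefficient computation applies to closed walks, and the averaging over the $n$ choices of basepoint — each $k$-cycle being based $k$ ways, once at each of its nodes, and the non-cyclic-length walks handled by the same exact-support argument — supplies the factor $\tfrac1k$. Finally I would remark, as in the previous section, that in an undirected graph the undirected counts are obtained by halving, and that $A_T$ is just the principal submatrix of $A$ on the rows and columns indexed by $T$.
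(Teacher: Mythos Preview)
Your core argument in the first two paragraphs is correct and essentially identical to the paper's: the paper simply packages your inner inclusion--exclusion over $U\subseteq W$ as an application of the previous theorem's formula for $\HC(G_W)$ (resp.\ $\HP(G_W)$), then performs the same swap of summation and the same count of supersets $W\supseteq T$ of the right size.

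Your third paragraph, however, contains an error. You write that a walk with node set $W_0$ contributes to the term indexed by $T$ exactly when $T\subseteq W_0$; the correct condition is $W_0\subseteq T$, since $\SUM(A_T^k)$ counts walks in the induced subgraph on $T$, whose node sets lie inside $T$ (you said this yourself in defining $N(U)$). Your claimed identity $\sum_{j=0}^r \binom{r}{j}\binom{n-j}{k+1-j}(-1)^{k+1-j}=[r=k{+}1]$ is accordingly false: for $n=3$, $k{+}1=2$, $r=1$ it gives $3-2=1$, not $0$. Fortunately this ``verification'' is redundant --- once you know that $\sum_{U\subseteq W}(-1)^{|W|-|U|}\SUM(A_U^k)$ counts walks with node set exactly $W$, summing over $|W|=k{+}1$ and swapping the finite double sum is purely formal and needs no further check; the paper does not attempt one. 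If you do want a direct coefficient check, the correct sum is $\sum_{t\ge r}\binom{n-r}{t-r}\binom{n-t}{k+1-t}(-1)^{k+1-t}$, which indeed equals $[r=k{+}1]$ via $\binom{n-r}{t-r}\binom{n-t}{k+1-t}=\binom{n-r}{k+1-r}\binom{k+1-r}{t-r}$ and the binomial theorem.
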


\begin{proof}
If a $k$-cycle $c$ visits nodes from a set $U\subset V$, $|U|=k$, 
then $c$ represents a Hamiltonian cycle in the subgraph $G_U$ of $G$ induced by $U$. Hence, the number of $k$-cycles in $G$ equals
$$\SC_k(G) = \sum_{U\subset V,\ |U|=k} \HC(G_U).$$
By formula~\eqref{eq:HC}, we further have
\[
\begin{split}
\SC_k(G) &= \sum_{U\subset V,\ |U|=k}\frac{1}{k} \sum_{S\subset U} (-1)^{|S|}\cdot \tr\left(A_{U\setminus S}^k\right)\\
&= \frac{1}{k} \sum_{T\subset V}\ \ \sum_{U:\ T\subset U \subset V,\ |U|=k} (-1)^{k-|T|}\cdot \tr\left(A_T^k\right)\\
&= \frac{1}{k} \sum_{T\subset V} \binom{n-|T|}{k-|T|}\cdot (-1)^{k-|T|}\cdot \tr\left(A_T^k\right),
\end{split}
\]
which proves \eqref{eq:SC}. (Here $T$ stands for the set $U\setminus S$.)

If a $k$-path $p$ visits nodes from a set $U\subset V$, $|U|=k+1$, 
then $p$ represents a Hamiltonian path in the subgraph $G_U$ of $G$ induced by $U$.
Similarly to the above, we can employ formula \eqref{eq:HP} to obtain \eqref{eq:SP}.
\end{proof}

In an undirected graph $G$, the number of undirected $k$-cycles and $k$-paths is given by $\frac{1}{2}\SC_k(G)$ and $\frac{1}{2}\SP_k(G)$, respectively.

Using formula \eqref{eq:SC}, we have computed the number of $k$-cycles in the graph of the regular 24-cell for various values of $k$ 
(sequence \texttt{A167983} in the OEIS~\cite{oeis}).

\section*{Acknowledgments}
The work of the first author is supported by the National Science Foundation under grant no. IIS-1462107.

\end{document}